\newcommand{\ignore}[1]{}
\renewcommand{\ge}{\geqslant}
\renewcommand{\le}{\leqslant}
\newtheorem{theorem}{Theorem}
\newtheorem{lemma}[theorem]{Lemma}
\newtheorem{statement}[theorem]{Statement}
\newtheorem{rem}[theorem]{Remark}
\newcommand{\Proof}[1]
        {
        \noindent
        \emph{Proof #1.}~
        }
\newsavebox{\smallProofsym}                     
\newcommand{\smalleop}[1]
        {
        \mbox{} \hfill #1~~\usebox{\smallProofsym}\!\!\!\!\!\!\
        }
\newcommand{\FF}{\ensuremath{\mathbb F}}
\newcommand\X{\mathcal{X}}
\newcommand\Y{\mathcal{Y}}
\newcommand\Z{\mathcal{Z}}
\DeclareMathOperator*{\rank}{rank}
\DeclareMathOperator*{\spann}{span}
\author{Pavel Gubkin}
\address{
\begin{flushleft}
Pavel Gubkin: pasha\_gubkin\_v@mail.ru\\\vspace{0.1cm}
St. Petersburg State University \\
Universitetskaya nab. 7-9, St. Petersburg, 199034, Russia
\end{flushleft}
}
\thanks{The work is supported by Ministry of Science and Higher Education of the Russian Federation, agreement № 075–15–2019–1619.}
\title{On unique tensor rank decomposition of 3-tensors}
\begin{document}
\maketitle
\begin{abstract}
    We answer to a question posed recently in
    \cite{lovitz2021}, proving the conjectured sufficient minimality and uniqueness condition of the 3-tensor decomposition. 
\end{abstract}
\section{Introduction}
Let $\FF$ be a field, $m$ be a positive integer and $V_1$,\ldots, $V_m$ be vector spaces over $\FF$. We say that tensor rank $\rank(v)$ of  $v\in V_1 \otimes V_2\otimes\ldots\otimes V_m$ is the minimal number such that $v$ can be expressed as a sum of $n$ product tensors, i.e., tensors of the form $v_1\otimes v_2,\ldots\otimes v_m$. Also for a collection of vectors $u_1,\ldots, u_r$  $k$-rank is defined as the largest number $k$ such that for any $k$ vectors  $u_{i_1},\ldots,u_{i_k}$ we have $\dim\spann(u_{i_1},\ldots,u_{i_k}) = k$.

In 1977 J. Kruskal proved \cite{Kruskal1977} that a tensor of a form
\begin{gather*}
    \sum_{i = 1}^n x_i\otimes y_i\otimes z_i
\end{gather*}
constitutes a unique representation as a sum of $n$ product tensors if $k$-ranks of the sets $\{x_1,\ldots, x_n\}$, $\{y_1,\ldots, y_n\}$ and $\{z_1,\ldots, z_n\}$ are large enough. Proof of this theorem in more general form can be found in \cite{Rhodes2010}. There are some other conditions that  imply uniqueness of the tensor rank decompositions in the literature. In this paper we focus on so-called U-condition which was originally introduces in \cite{Domanov201321}, \cite{Domanov20132}; we will use the definition from \cite{lovitz2021}.

Denote $[n]=\{1,\ldots,n\}$; $[n..m]=\{n,\ldots,m\}$.
By $\langle A\rangle$ denote the linear span of a set 
$A$ in a linear space. We also write $\langle x_1,\ldots,x_k\rangle$ meaning
$\langle \{x_1,\ldots,x_k\}\rangle$. For a sequence $\alpha=(\alpha_1,\ldots,\alpha_n)\in \FF^n$ let $\omega(\alpha)$ be the number of indices $i$ for which $\alpha_i\ne 0$. We say that a set of tensors  \{$x_i\otimes y_i\otimes z_i$\}, $i\in[n]$ satisfies the \textbf{Condition U} if for any $\alpha\in \FF^n$
\begin{gather}\label{U-condition}
    \rank\left(\sum_{i = 1}^n \alpha_i y_i\otimes z_i \right)\ge \min(\omega(\alpha), n - d + 2),
\end{gather}
where $d$ is the dimension of $\langle x_i|i\in [n]\rangle$. The aim of this paper is Question 38 from \cite{lovitz2021}, we prove the following
\begin{theorem}\label{theorem1} Let $\FF$ be a field; $\X, \Y, \Z$ be 
linear spaces over $\FF$; $x_1,\ldots,x_n\in \X$;
$y_1,\ldots,y_n\in \Y$; $z_1,\ldots,z_n\in \Z$. Assume that
\begin{itemize}
    \item $x_i\ne 0$ for all $i$ and also $x_i$ and $x_j$ are not parallel for all $i\ne j$;
    \item the set of tensors  \{$x_i\otimes y_i\otimes z_i$\}, $i\in[n]$ satisfies the Condition U.
\end{itemize}
Then for $\theta:=\sum_{i=1}^n x_i\otimes y_i\otimes z_i$ we have $\rank \theta=n$ 
and the representation of $\theta$ as a sum of $n$ product tensors is unique.
\end{theorem}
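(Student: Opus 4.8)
The plan is to reconstruct each rank-one term of $\theta$ from the linear-algebraic data encoded by the Condition U, and I begin by recording what that condition buys us about the matrices $Y_i := y_i\otimes z_i \in \Y\otimes\Z$. Set $S := \langle Y_1,\ldots,Y_n\rangle$ and $W := \langle x_i \mid i\in[n]\rangle$, so $\dim W = d$. First, the $Y_i$ are linearly independent: a nontrivial relation $\sum_i\alpha_iY_i = 0$ would have $\omega(\alpha)\ge 1$, yet Condition \eqref{U-condition} forces $\rank(\sum_i\alpha_iY_i)\ge\min(\omega(\alpha),n-d+2)\ge 1$, a contradiction. More is true, and this is the engine of the whole argument: writing any element of $S$ uniquely as $\sum_i\beta_iY_i$, if its matrix rank $r$ satisfies $r < n-d+2$ then, combining $r\ge\min(\omega(\beta),n-d+2)$ with the trivial bound $r\le\omega(\beta)$, we get $\omega(\beta)=r$. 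In particular the \emph{only rank-one elements of $S$ are the scalar multiples of the individual $Y_i$}. I will call this the Key Fact.

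Next I would take an arbitrary decomposition that is minimal, $\theta=\sum_{j=1}^m a_j\otimes b_j\otimes c_j$ with $m=\rank\theta\le n$, write $Y'_j := b_j\otimes c_j$, and aim to show $m=n$ and that this decomposition coincides with the given one (any $n$-term decomposition is then minimal, so this proves both uniqueness and $\rank\theta=n$). Since $\theta\in W\otimes\Y\otimes\Z$, applying a projection $\X\to W$ leaves $\theta$ fixed and replaces each $a_j$ by its image, so I may assume $a_j\in W$; minimality keeps all terms nonzero, whence $\langle a_j\rangle = W$. Consider the flattening $T_\theta\colon\X^*\to\Y\otimes\Z$, $\phi\mapsto\sum_i\phi(x_i)Y_i=\sum_j\phi(a_j)Y'_j$; by the linear independence of the $Y_i$ its image has dimension exactly $d$. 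When $d=n$ this already closes the problem: then $\operatorname{Im}T_\theta=S$, and since $\operatorname{Im}T_\theta\subseteq\langle Y'_j\rangle$ we get $m\ge n$, hence $m=n$ and $\langle Y'_j\rangle=S$; each $Y'_j$ is then a rank-one element of $S$, so by the Key Fact a multiple of some $Y_{\sigma(j)}$, with $\sigma$ a bijection.

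The case $d<n$ is the heart of the matter, and here the flattening alone sees only a $d$-dimensional slice of $S$, so I must bring in the individual slices. For $\phi\in W^*$ put $M_\phi:=T_\theta(\phi)$, $z(\phi):=\#\{i:\phi(x_i)=0\}$ and $z'(\phi):=\#\{j:\phi(a_j)=0\}$. Condition \eqref{U-condition} gives $\rank M_\phi\ge\min(n-z(\phi),\,n-d+2)$, while the second expression for $M_\phi$ gives $\rank M_\phi\le m-z'(\phi)$, so
\[
m-z'(\phi)\ \ge\ \min\bigl(n-z(\phi),\,n-d+2\bigr)\qquad\text{for all }\phi\in W^*.
\]
The remaining input is the pairwise non-parallel hypothesis: no relation $\sum_i\mu_i x_i=0$ has $\omega(\mu)\in\{1,2\}$, so the space of relations among the $x_i$ has minimum weight $\ge 3$. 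The plan is to feed the displayed slice inequality, together with this minimum-weight condition, into a Kruskal-type permutation lemma (available through Rhodes' account \cite{Rhodes2010}) to force $m=n$ and to produce a single permutation matching the two families of rank-one terms up to scaling, i.e.\ after reindexing $Y'_i\in\FF Y_i$. I expect this globalization to be the principal obstacle: the slice bounds constrain each $M_\phi$ separately, and the difficulty is to assemble them into one bijection while $d<n$ means the $\X$-factors $a_j$ do not by themselves determine the terms — one must control the matrix factors $Y'_j$ simultaneously, and it is exactly here that the threshold $n-d+2$, rather than mere pairwise non-parallelism, is indispensable. As a fallback route I would try to prove directly that $\langle Y'_j\rangle\subseteq S$, after which the Key Fact again yields $Y'_j\in\FF Y_{\sigma(j)}$.

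Finally, assuming the matching step has delivered $m=n$ and (after reindexing) $Y'_i=\mu_iY_i$, I would conclude as follows. By linear independence of the $Y_i$ choose functionals $\psi_i\in(\Y\otimes\Z)^*$ with $\psi_i(Y_k)=\delta_{ik}$; applying $\operatorname{id}\otimes\psi_i$ to the two expressions $\theta=\sum_k x_k\otimes Y_k=\sum_k a_k\otimes\mu_kY_k$ gives $x_i=\mu_i a_i$. Hence $a_i$ is parallel to $x_i$ and the scalars cancel, so each competing term equals $x_i\otimes y_i\otimes z_i$. This shows the decomposition is unique and, in particular, uses exactly $n$ terms, so $\rank\theta=n$.
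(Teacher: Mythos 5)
Your preparatory observations are sound and agree with the paper's: the linear independence of the $Y_i$, the ``Key Fact'' that the only rank-one elements of $S$ are scalar multiples of the individual $Y_i$, the reduction to $a_j\in W$, and the slice inequality $m-z'(\phi)\ge\min\bigl(n-z(\phi),\,n-d+2\bigr)$ are all correct, and your treatment of the case $d=n$ is complete. But the case $d<n$ --- which you yourself flag as ``the principal obstacle'' --- is left as a plan rather than a proof, and the plan as stated overshoots what a Kruskal-type permutation lemma can deliver. The lemma the paper actually invokes (Lemma 12.5.3.5 of \cite{Landsberg2012}), fed exactly your slice inequality translated into hyperplane-intersection counts, yields only that the $\X$-factors match: $a_j=\lambda_j x_{\pi(j)}$ for some permutation $\pi$ and nonzero scalars (in particular $m=n$). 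It does \emph{not} produce $Y'_j\in\FF Y_{\pi(j)}$, and your concluding step assumes precisely that. Your fallback, proving $\langle Y'_j\rangle\subseteq S$ directly, is also not available at this stage: when $d<n$ the flattening image is a proper $d$-dimensional subspace of $S$, so nothing yet forces the $Y'_j$ into $S$.

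Closing that gap is where most of the paper's work lies. After normalizing $a_j=x_j$, one writes $x_i=\sum_{j\le d}\alpha_{ij}x_j$ for $i>d$, forms the slices $T_j=Y_j+\sum_{i>d}\alpha_{ij}Y_i=Y'_j+\sum_{i>d}\alpha_{ij}Y'_i$, and uses Condition U to show each $T_j$ attains the maximal rank $|J_j|$ with $J_j=\{j\}\cup\{i:\alpha_{ij}\ne0\}$; a projection argument then forces $\langle y_i:i\in J_j\rangle=\langle u_i:i\in J_j\rangle$, and the hypothesis that no two $x_i$ are parallel places each $y_s$ with $s>d$ in the intersection of two such spans, from which one extracts $y_s\parallel u_s$ coefficient by coefficient (and similarly for the $z$'s), before finally propagating back to $j\le d$. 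Only after all of that does an endgame like your contraction against dual functionals of the $Y_i$ become applicable. As it stands, your proposal is an accurate outline of the first half of the argument plus a correct finish, with the decisive middle missing.
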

\begin{rem}
The theorem holds true if we demand \eqref{U-condition} not for all $\alpha\in F^n$, but only for $\alpha$ such that $\alpha =(\eta(x_1),\ldots, \eta(x_n)$ for some linear functional $\eta\in \X^*$. In other words, following the notations from \cite{lovitz2021}, the theorem holds with Condition U replaced by condition W.
\end{rem}

\section{Proof of the Theorem \ref{theorem1}}

We will use the following

\begin{lemma}[Lemma 12.5.3.5, \cite{Landsberg2012}]\label{l1}
Let $W$ be a $d$-dimensional linear space over a field $\FF$, and $\{x_i\}$, $\{v_i\}$, $i\in[n]$ be two sets of vectors such that
\begin{itemize}
    \item $\langle v_i| i\in [n]\rangle = W$;
    \item $x_i$ and $x_j$ are not parallel for any two distinct indices $i,j$;
    \item for any hyperplane $H$
    \begin{gather*}
        \#(H\cap \{v_i\}_{i\in [n]})\ge d - 1 \Longrightarrow  \#(H\cap \{x_i\}_{i\in [n]})\ge  \#(H\cap \{v_i\}_{i\in [n]}).
    \end{gather*}
\end{itemize}
Then there exists a permutation $\pi$ of indices $[n]$ and the non-zero scalars $\lambda_i\in \FF$ such that $v_i=\lambda_i x_{\pi_i}$ for all $i\in [n]$.
\end{lemma}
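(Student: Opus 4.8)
The plan is to induct on the number of vectors $n$. Since $\langle v_i\mid i\in[n]\rangle=W$ has dimension $d$ we always have $n\ge d$, so the base case is $n=d$, and I will treat the (degenerate) possibilities of a zero or a repeated direction among the $v_i$ as things the argument rules out a posteriori, since the desired conclusion $v_i=\lambda_ix_{\pi_i}$ with $\lambda_i\ne0$ and pairwise non-parallel $x$'s forces the $v_i$ to be nonzero and pairwise non-parallel as well. In the base case $v_1,\dots,v_d$ form a basis; for each $j$ set $H_j=\langle v_i\mid i\ne j\rangle$, a hyperplane with $v_j\notin H_j$, so $\#(H_j\cap\{v_i\})=d-1$ and the Condition gives $\#(H_j\cap\{x_i\})\ge d-1$. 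No $x_\ell$ lies in $\bigcap_jH_j=\{0\}$, so each $x_\ell$ lies off at least one $H_j$; the double count $\sum_j\#(H_j\cap\{x_i\})\ge d(d-1)$ against $\sum_\ell\#\{j:x_\ell\notin H_j\}\le d$ forces each $x_\ell$ to lie off exactly one $H_{j(\ell)}$, hence $x_\ell\in\bigcap_{j\ne j(\ell)}H_j=\langle v_{j(\ell)}\rangle$. As parallel $x$'s are forbidden, $\ell\mapsto j(\ell)$ is a bijection and we recover $v_j=\lambda_jx_{\pi(j)}$.

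For the inductive step the engine is the removal of a single matched pair. Suppose we have located indices $i_0,j_0$ with $v_{i_0}$ parallel to $x_{j_0}$. The crucial point is that then $v_{i_0}\in H\iff x_{j_0}\in H$ for every hyperplane $H$, so deleting both vectors lowers $\#(H\cap\{v_i\})$ and $\#(H\cap\{x_i\})$ by the same amount; consequently the Condition is inherited by the remaining $n-1$ vectors. Two cases arise. If $v_{i_0}$ is redundant, the remaining $v_i$ still span $W$ and we obtain an instance in dimension $d$ with $n-1$ vectors. If $v_{i_0}$ is essential, the remaining $v_i$ span a hyperplane $W'$ that then contains $n-1\ge d-1$ of the $v$'s, so the Condition applied to $W'$ forces $x_{j_0}$ to be the only $x$ off $W'$; hence all surviving data lie in $W'$, and lifting each hyperplane $H'$ of $W'$ to $\langle H',v_{i_0}\rangle$ (which meets $W'$ exactly in $H'$ and contains both deleted vectors) transfers the Condition down to dimension $d-1$. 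In either case the inductive hypothesis applies and, together with the pair $(v_{i_0},x_{j_0})$, yields the full permutation.

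Everything therefore reduces to the claim I expect to be the main obstacle: at least one $v_i$ is parallel to some $x_j$. To attack it I would take a hyperplane $H^*$ carrying the maximal number $\beta$ of the $v_i$. Maximality first forces the $v_i$ on $H^*$ to span $H^*$, since otherwise one could enlarge a lower-dimensional flat through a point off $H^*$ into a hyperplane meeting strictly more of the $v_i$; thus $\beta\ge d-1$ and $\#(H^*\cap\{x_i\})\ge\beta$. Fixing $v_0\notin H^*$ and a basis $w_1,\dots,w_{d-1}$ of $H^*$ drawn from the $v_i$, the hyperplanes $G_t=\langle v_0,w_s\mid s\ne t\rangle$ satisfy $\#(G_t\cap\{v_i\})\ge d-1$, hence $\#(G_t\cap\{x_i\})\ge d-1$, while $\bigcap_tG_t=\langle v_0\rangle$; the aim is to force some $x_j$ into this line. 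The difficulty is that these hyperplanes are large and also capture the $x$'s lying far from $v_0$, so the naive incidence count is too lossy; making it tight should use maximality of $H^*$ to bound the vectors lying off $H^*$, and I anticipate needing an auxiliary induction on $d$ via projection from $v_0$, under which the Condition transfers with a surplus, $\#(\bar G\cap\{\bar x\})\ge\#(\bar G\cap\{\bar v\})+1$, that I expect to be enough to pin down the sought line once the few off-$H^*$ vectors are controlled.
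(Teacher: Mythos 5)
Your reduction machinery is sound as far as it goes: the base case $n=d$ via the hyperplanes $H_j=\langle v_i\mid i\ne j\rangle$ and the double count is correct, and the pair-removal step checks out in both branches (if $v_{i_0}$ is parallel to $x_{j_0}$ then $v_{i_0}\in H\iff x_{j_0}\in H$ for every hyperplane $H$, so the incidence hypothesis is inherited; in the essential case your lifting $H'\mapsto\langle H',v_{i_0}\rangle$ does transfer the Condition to $W'$, since $\langle H',v_{i_0}\rangle\cap W'=H'$ and the one $x$ off $W'$ is exactly $x_{j_0}$). But the proof is genuinely incomplete, because the claim on which everything rests --- that at least one $v_{i_0}$ is parallel to some $x_{j_0}$ --- is never established. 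Your final paragraph is a plan, not an argument: you yourself note that the incidence count over the hyperplanes $G_t=\langle v_0, w_s\mid s\ne t\rangle$ is ``too lossy'' and that you ``anticipate needing an auxiliary induction'' with a surplus inequality that you ``expect to be enough.'' No such auxiliary lemma is stated or proved, and this claim is not a technical footnote: it is essentially the entire content of the lemma. Indeed, once one knows that every line spanned by a $v_i$ contains at least as many $x$'s as $v$'s, the pairwise non-parallelism of the $x$'s immediately yields the full matching in one stroke, with no need for your induction on $n$ at all.

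For comparison, the paper (following Lemma 12.5.3.5 of Landsberg, Section 12.5.4) organizes the proof around exactly the statement you are missing, strengthened and proved by downward induction on the subspace dimension $m$: for every $m$-dimensional subspace $H_m$, if $\#(H_m\cap\{v_i\})\ge m$ then $\#(H_m\cap\{x_i\})\ge\#(H_m\cap\{v_i\})$. The hypothesis of the lemma is the case $m=d-1$, and the case $m=1$ says each line through some $v_i$ carries at least as many $x$'s as $v$'s; since distinct $x$'s are non-parallel, each such line carries exactly one $x$ and one $v$, giving the permutation directly. So the hard work lives precisely in the descent from $m$ to $m-1$, which is the analogue of the surplus-transfer step you sketch via projection from $v_0$ but do not carry out. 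To repair your write-up you would either have to prove that descent (at which point your induction on $n$ becomes redundant) or find an independent argument producing a single matched pair; as submitted, the proposal proves the lemma only conditionally on its hardest step.
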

\begin{proof}
Let us outline the idea of the proof. Proceeding by induction one has to prove 
\begin{itemize}
    \item for any $m$ - dimensional subspace $H_m$
    \begin{gather*}
        \#(H\cap \{v_i\}_{i\in [n]})\ge m \Longrightarrow  \#(H\cap \{x_i\}_{i\in [n]})\ge  \#(H\cap \{v_i\}_{i\in [n]}).
    \end{gather*}
\end{itemize}
Notice that this assertion for $m = d - 1$ is given in the lemma and for $m = 1$ this immediately implies the lemma. For details see Section 12.5.4, \cite{Landsberg2012}.
\end{proof}

\begin{proof}[Proof of the Theorem \ref{theorem1}]
Assume that $\theta=\sum_{i=1}^n v_i\otimes u_i \otimes w_i$ for certain $v_i\in \X$, $u_i\in \Y$,
$w_i\in \Z$. We need to  show that the multisets $\{x_i\otimes y_i\otimes z_i|i\in [n]\}$ and
$\{v_i\otimes u_i\otimes w_i|i\in [n]\}$ coincide. We do it in two steps. At the first step, we 
apply Lemma \ref{l1} to prove that

$(\heartsuit)$ there exists a permutation $\pi$ of indices $[n]$ and the non-zero
scalars $\lambda_i\in \FF$ such that $v_i=\lambda_i x_{\pi_i}$ for all $i\in [n]$.

In order to prove $(\heartsuit)$, take an arbitrary linear functional $\eta\in \X^*$.
We get
\begin{equation}\label{eq1}
\sum_{i=1}^n \eta(x_i) y_i\otimes z_i=\sum_{i=1}^n \eta(v_i) u_i\otimes w_i=: \Theta(\eta).
\end{equation}
Notice that for $\eta\in \langle v_i | i\in [n]\rangle^\perp$ one has $\Theta(\eta) = 0$ and the Condition U implies that $\eta(x_i) = 0$ for every $i\in[n]$. Therefore, $$\langle x_i | i\in [n]\rangle\subset \langle v_i | i\in [n]\rangle=: W$$ 
and without loss of generality we can now assume $\X = W$.  

For a hyperplane $H$ denote the orthogonal projection onto $H^\perp$ by  $\eta_H$. Consider hyperplane $H$ such that 
\begin{gather}\label{hyperplane condition}
    \#(H\cap \{v_i\}_{i\in [n]})\ge \dim W - 1.
\end{gather}
To apply Lemma 3 we need to show that
\begin{gather*}
    \#(H\cap \{x_i\}_{i\in [n]})\ge  \#(H\cap \{v_i\}_{i\in [n]}),
\end{gather*}
or equivalently 
\begin{gather*}
    \#\{i | \eta_H(x_i) = 0\}\ge \#\{i | \eta_H(v_i) = 0\}
\end{gather*}
Denote the two latter sets by $I$ and $J$ respectively
then
$$
\rank \Theta(\eta_H)\leqslant n-|J|\leqslant n-\dim W+1\leqslant n - d + 1,
$$
where $d$ is the dimension of  $\langle x_i | i\in [n]\rangle$. On the other hand, from the condition U we kwow
\begin{gather*}
    \rank \Theta(\eta_H)\geqslant \min(|\{i:\eta_H(x_i)\ne 0\}|,n-d+2).
\end{gather*}
If the latter minimum equals $n-d+2$, this
is a contradiction. Thus $$n-|J|\geqslant \rank \Theta(\eta_H)\geqslant |\{i:\eta_H(x_i)\ne 0\}|=n-|I|.$$
Therefore $|I|\ge |J|$ for any hyperplane $H$ satisfying \eqref{hyperplane condition} and we get $(\heartsuit)$ by Lemma \ref{l1}.

Now, permuting indices and using $(\lambda x)\otimes u\otimes v=x\otimes (\lambda u)\otimes v$
we may suppose that $v_i=x_i$ for all $i$. Without loss of generality we may suppose that
$x_1,\ldots,x_d$ is a basis of $\langle x_i| i\in [n]\rangle$. Denote $x_i=\sum_{j=1}^d \alpha_{ij} x_j$
for $i\in [d+1..n]$. We get
$$
T_j:=y_j\otimes z_j+\sum_{i=d+1}^n \alpha_{ij} y_i\otimes z_i=u_j\otimes w_j+\sum_{i=d+1}^n \alpha_{ij} u_i\otimes w_i, \quad j=1,\ldots,d.
$$
By Condition U, we get
\begin{gather*}
    \rank T_j=1+|\{i:\alpha_{ij}\ne 0\}|.
\end{gather*}
Denote 
\begin{gather*}
    J_j=\{j\}\cup \{i:\alpha_{ij}\ne 0\}.
\end{gather*}
By the above maximal rank condition we get that the vectors $\{y_i:i\in J_j\}$ are linearly independent. Analogously, so are the vectors
$\{z_i:i\in J_j\}$, $\{u_i:i\in J_j\}$, $\{v_i:i\in J_j\}$. Next, assume that
$k\in J_j$ but $y_k\notin \langle u_i:i\in J_j\rangle$. Then there exists a projector $P\colon \Y\to  \langle u_i:i\in J_j\rangle$ 
for which $Py_k=0$. We get $(P\otimes Id) T_j=T_j$ since $Pu_i=u_i$ for all $i\in J_j$, but
\begin{gather*}
    (P\otimes Id) T_j=(Py_j)\otimes z_j+\sum_{i>d} \alpha_{ij} (Py_i)\otimes z_i
\end{gather*}
is a representation of 
$T_j$ as a sum of less than $|J_j|$ product tensors. Thus all $y_k$ with $k\in J_j$ belong to 
the subspace $\langle u_i:i\in J_j\rangle=:\Y_j$. Since $|J_j|\leqslant n-d+1$, and any $n-d+1$ (and even any $n-d+2$) $y_j$'s
are linearly independent by Condition U, we get $\Y_j=\langle y_i:i\in J_j\rangle$. Next, $\dim (\Y_j+\Y_k)=|J_j\cup J_k|$
whenever $j,k\in [d]$ and $j\ne k$, that's because $|J_j\cup J_k|\leqslant n-d+2$ and corresponding $y_i$'s with $i\in J_j\cup J_k$
are linearly independent. But $\Y_j+\Y_k$ is also generated by $u_i$'s with $i\in J_j\cup J_k$. Thus, these $u_i$'s are also linearly independent.

Let $s\in [d+1..n]$. Since no two $x_i$'s are parallel, there exist at least two indices $j,k \in [d]$ 
for which $\alpha_{sj}\ne 0$ and $\alpha_{sk}\ne 0$. In other words, $s\in J_j\cap J_k$. Then, $y_s$ belongs
to $\Y_j\cap \Y_k$. This space is generated by $u_i$ with $i\in J_j\cap J_k\subset [d+1..n]$. So, we have just
proved that
\begin{gather*}
    y_s=\sum_{i=d+1}^n f_{si} u_i
\end{gather*}
for certain scalars $f_{si}$. Now fix $s,k \in [d+1..n]$
such that $s\ne k$. Since $x_s$ and $x_k$ are not parallel, we may without loss of generality assume that $\alpha_{s1} \alpha_{k2}\ne \alpha_{k1}\alpha_{s2}$.
Consider the tensor $\alpha_{k2} T_1-\alpha_{k1}T_2$. It equals
$$
p y_{1}\otimes z_1+q y_2\otimes z_2+\sum_{i=d+1}^n r_i y_i\otimes z_i=
p u_{1}\otimes w_1+q u_2\otimes w_2+\sum_{i=d+1}^n r_i u_i\otimes w_i
$$
for certain coefficients $p,q,r_{d+1},\ldots,r_n$, where $r_i=\alpha_{k2} \alpha_{i1}-\alpha_{k1} \alpha_{i2}$. In particular, $r_k=0$ but
$r_s\ne 0$. As above, we conclude that $y_s$ belongs to a span of $u_1,u_2$ and $\{u_i: r_i\ne 0\}$.  Note that $U:=\{u_1,u_2$, $u_{d+1}, \ldots,u_n\}$
are linearly independent, since $y_1,y_2,y_{d+1},\ldots,y_n$ lie in their span (and any $n-d+2$ vectors $y_i$'s are linearly independent).
Therefore, the expansion of $y_s$ in the system $U$ does not contain $u_k$. This yields $f_{sk}=0$. 

So, we proved that $u_i$'s are parallel to $y_i$'s for $i>d$. Analogously, $w_i$ are parallel to $z_i$ for $i>d$, and we get $u_i\otimes w_i=\lambda_i y_i\otimes z_i$
for $i>d$. Fix $i>d$ and find $j\in [d]$ for which $\alpha_{ij}\ne 0$. Looking at $T_j$ we get 
$$
y_j\otimes z_j+\sum_{i=d+1}^n (1-\lambda_i) \alpha_{ij} y_i\otimes z_i=u_j\otimes w_j.
$$ 
If there are at least two non-zero coefficients in the left hand side, this contradicts to condition (ii). Therefore $\lambda_i=1$. This holds for all $i=d+1,\ldots,n$. So, $u_i\otimes w_i=y_i\otimes z_i$
for all $i>d$. Looking at two expressions for $T_j$ ($j=1,\ldots,d$) we get the same for $j\leqslant d$.
\end{proof}

\section{Field dependence of the Condition U}
It is well known that tensor rank is field dependent (see section 3.1 in \cite{kolda2009}, or section 2 in \cite{Bergman1969}). Here we show that U-condition is also field dependent.

\medskip

\noindent \textbf{Example.}
Consider three tensors over $\mathbb{F}_2$:
\begin{gather*}
    a\otimes a,\quad b\otimes b, \quad (a + b)\otimes (a + b).
\end{gather*}
Linear combinations over the field $\mathbb{F}_2$ are just sums of subsets, and one can check that
\begin{gather}\label{f2}
    \rank(\alpha_1 a\otimes a + \alpha_2 b\otimes b + \alpha_3(a + b)\otimes (a + b)) = \min(\omega(\alpha),2 ).
\end{gather}
On the other hand, we can consider the same set of tensors over $\FF_4 = \FF_2[\xi]$, where $\xi^2 = \xi + 1$. We have
\begin{gather}
    \nonumber
    (1 + \xi)a\otimes a + b\otimes b +  \xi(a + b)\otimes (a + b)
    =
    \\ \nonumber
    =a\otimes a + (1 + \xi)b\otimes b + \xi a\otimes b + \xi b\otimes a =
    \\ \label{f4}
    = (a + \xi b)\otimes(a + \xi b).
\end{gather}
Now consider 3 following 3-dimensional tensors:
\begin{gather}\label{3tensors}
    a\otimes a\otimes a,\quad b\otimes b\otimes b, \quad c\otimes(a + b)\otimes (a + b).
\end{gather}
We have $n = d = 3$. Hence \eqref{f2} and \eqref{f4} imply that condition U holds over $\FF_2$ and does not hold over $\FF_4$. Also from Theorem \ref{theorem1} and the statement below it follows that the uniqueness of tensor decomposition of sum of tensors in \eqref{3tensors} holds over $\FF_2$ and does not hold over $\FF_4$.

\begin{statement}
If $n = d$, then the Condition U is not only sufficient for uniqueness of the tensor rank decomposition, but also a necessary one.
\end{statement}
\begin{proof}
Indeed, for $n = d$ Condition U takes the form
\begin{gather*}
    \rank\left(\sum_{i = 1}^n \alpha_i y_i\otimes z_i \right)\ge \min(\omega(\alpha), 2).
\end{gather*}
Assume that Condition U does not hold. Then there exists $\alpha$ such that $\omega(\alpha)\ge 2$ and 
\begin{gather*}
    \rank\left(\sum_{i = 1}^n \alpha_i y_i\otimes z_i \right)\le 1,
\end{gather*}
equivalently
\begin{gather*}
    \sum_{i = 1}^n \alpha_i y_i\otimes z_i  = y\otimes z
\end{gather*}
for some vectors $y\in\Y$ and $z\in Z$. Without loss of generality, assume that $\alpha_1\neq 0$, then 
\begin{gather*}
    y_1\otimes z_1 = \alpha_1^{-1}\left(y\otimes z -\sum_{i = 1}^n \alpha_i y_i\otimes z_i \right)
\end{gather*}
and 
\begin{gather*}
    \sum_{i = 1}^n x_i\otimes y_i\otimes z_i = \alpha_1^{-1}x_1\otimes\left(y\otimes z -\sum_{i = 1}^n \alpha_i y_i\otimes z_i \right) + \sum_{i = 2}^n x_i\otimes y_i\otimes z_i =
    \\
    = \alpha_1^{-1}x_1\otimes y\otimes z + \sum_{i = 2}^n (x_i - \alpha_1^{-1}\alpha_i x_1)\otimes y_i\otimes z_i,
\end{gather*}
therefore the uniqueness does not hold.
\end{proof}

\section{Acknowledgements}

I am grateful to B. Lovitz for pointing my attention
to Lemma \ref{l1}, which allowed to simplify the argument
in the case of finite field dramatically. 


\bibliographystyle{plain}
\bibliography{references.bib}

\begin{thebibliography}{1}

\bibitem{Bergman1969}
George~M. Bergman.
\newblock Ranks of tensors and change of base field.
\newblock {\em J. Algebra}, 11:613--621, 1969.

\bibitem{Domanov201321}
Ignat Domanov and Lieven De~Lathauwer.
\newblock On the uniqueness of the canonical polyadic decomposition of
  third-order tensors---{P}art {I}: {B}asic results and uniqueness of one
  factor matrix.
\newblock {\em SIAM J. Matrix Anal. Appl.}, 34(3):855--875, 2013.

\bibitem{Domanov20132}
Ignat Domanov and Lieven De~Lathauwer.
\newblock On the uniqueness of the canonical polyadic decomposition of
  third-order tensors---{P}art {II}: {U}niqueness of the overall decomposition.
\newblock {\em SIAM J. Matrix Anal. Appl.}, 34(3):876--903, 2013.

\bibitem{kolda2009}
Tamara~G. Kolda and Brett~W. Bader.
\newblock Tensor decompositions and applications.
\newblock {\em SIAM Rev.}, 51(3):455--500, 2009.

\bibitem{Kruskal1977}
Joseph~B. Kruskal.
\newblock Three-way arrays: rank and uniqueness of trilinear decompositions,
  with application to arithmetic complexity and statistics.
\newblock {\em Linear Algebra Appl.}, 18(2):95--138, 1977.

\bibitem{Landsberg2012}
Joseph~M. Landsberg.
\newblock {\em Tensors: geometry and applications}, volume 128 of {\em Graduate
  Studies in Mathematics}.
\newblock American Mathematical Society, Providence, RI, 2012.

\bibitem{lovitz2021}
Benjamin Lovitz and Fedor Petrov.
\newblock A generalization of {Kruskal}'s theorem on tensor decomposition.
\newblock arXiv 2103.15633, 2021.

\bibitem{Rhodes2010}
John~A. Rhodes.
\newblock A concise proof of {K}ruskal's theorem on tensor decomposition.
\newblock {\em Linear Algebra Appl.}, 432(7):1818--1824, 2010.

\end{thebibliography}

\end{document}